\documentclass[12pt,a4paper,twoside]{amsart}
\usepackage{geometry}
\usepackage{txfonts,pxfonts,tikz}
\usepackage[T1]{fontenc}
\usepackage[utf8]{inputenc}

\usepackage{amsmath}
\usepackage{amssymb}
\usepackage{amsthm}

\usepackage{mathtools}
\usepackage{stmaryrd}

\usepackage{wrapfig}
\usepackage{subcaption}

\usepackage{enumitem}
\usepackage{verbatim}
\usepackage{makecell}
\usepackage{xcolor}

\usepackage{hyperref}
\hypersetup{colorlinks,linkcolor={blue!75!black},citecolor={blue!75!black},urlcolor={blue!75!black}}

\usepackage{setspace}
\usepackage{microtype} 

\DeclareMathOperator*{\supp}{supp}

\newcommand{\norm}[1]{\|#1\|}

\newcommand{\R}{\mathbb{R}}
\newcommand{\Z}{\mathbb{Z}}

\newcommand{\abs}[1]{\left\vert#1\right\vert}
\newcommand{\dint}{\,\mathrm{d}}

\newcommand{\hP}{h_{P}}
\newcommand{\nuP}{\nu_{P}}

\mathtoolsset{showonlyrefs}

\theoremstyle{theorem}
\newtheorem{theorem}{Theorem}[section]
\newtheorem{corollary}[theorem]{Corollary}
\newtheorem{proposition}[theorem]{Proposition}
\newtheorem{lemma}[theorem]{Lemma}

\theoremstyle{definition}

\theoremstyle{remark}
\newtheorem{remark}[theorem]{Remark}

\numberwithin{equation}{section}

\allowdisplaybreaks[4]

\title[Newton-height decay estimates for oscillatory integral operators]{Sharp decay estimates for $(2+1)$-dimensional oscillatory integral operators via Newton height
}
\author{Shaozhen Xu}

\address{School of Information Engineering, Nanjing Xiaozhuang University, Nanjing 211171, China}
\email{shaozhen@nju.edu.cn}

\begin{document}

\begin{abstract}
We study $(2+1)$-dimensional oscillatory integral operators of the form
\[
T_\lambda f(x,y)=\int_{\R}e^{i\lambda P(x,y)t^k}\psi(x,y,t)f(t)\dint t,\qquad k\ge1,
\]
where the phase $P$ is a real-analytic function with a critical point at the origin. We establish the sharp $L^2\to L^2$ decay rate of $\frac12\min\{1/\hP, 1/k\}$, where $\hP$ denotes Varchenko's Newton height of $P$. The two terms in the minimum reflect a natural competition between the spatial degeneracy of $P$ and the temporal degeneracy of $t^k$; their optimality is confirmed by a Knapp-type and a focusing example, respectively. A $TT^{*}$ reduction transforms the $L^2$ estimate into a scalar oscillatory integral, allowing Varchenko's theorem to apply directly. Building on this foundation, complex interpolation yields the sharp $L^2\to L^{2k+2}$ bound. Finally, in the regime $\hP\ge k$, we obtain sharp $L^2\to L^p$ decay estimates for all $p$.
\end{abstract}

\maketitle

\setcounter{tocdepth}{1}
\tableofcontents

\section{Introduction}

 It's a classical problem in harmonic analysis to study the asymptotic decay of the scalar oscillatory integral
\begin{equation}
	I(\lambda)=\int_{\R^n}e^{i\lambda Q(x)}\phi(x)\dint x
\end{equation}
as the real parameter $\lambda \to \infty$. If $Q$ is nondegenerate, the stationary phase method gives a complete asymptotic expansion. If $Q$ is degenerate but real-analytic, the decay rate is governed by the geometry of $Q$. Varchenko \cite{Var76} proved that, for generic real-analytic phases in any dimension, the sharp rate is determined by the \emph{Newton distance} of $Q$. In two dimensions no genericity assumption is needed, provided one works in adapted coordinates: the sharp rate is $-1/h_Q$, where the Newton height $h_Q$ is the coordinate at which the bisectrix meets the boundary of the Newton polygon. Specifically,
\begin{equation}\label{Var-scalar}
	\abs{I(\lambda)}\le C\,(1+\abs{\lambda})^{-1/h_Q}\bigl(\log(2+\abs{\lambda})\bigr)^{\nu_Q},\qquad \nu_Q\in\{0,1\},
\end{equation}
where the logarithmic factor is present exactly when the bisectrix meets the boundary at a vertex of the Newton polygon in some adapted coordinate system; in particular $\nu_Q=0$ when $h_Q=1$. In this paper, we prove an operator analogue of the scalar bound \eqref{Var-scalar}.

The operator analogue of $I(\lambda)$ is
\begin{equation}
	\mathcal{T}_\lambda f(x)=\int_{\R^{n_Y}}e^{i\lambda S(x,y)}\psi(x,y)f(y)\dint y,\quad x\in\R^{n_X}.
\end{equation}
This is an $(n_X+n_Y)$-\emph{dimensional} oscillatory integral operator. The main goal is to establish the sharp $L^q\to L^p$ decay of $\mathcal{T}_\lambda$. If $n_X=n_Y=n$ and $S$ is nondegenerate, H\"ormander \cite{Hor73} proved the sharp $L^2\to L^2$ decay rate is $-n/2$. In the degenerate case, one hopes to describe the decay by the Newton polyhedron of $S$, just as Varchenko did for $I(\lambda)$. The $(1+1)$-dimensional theory is well developed; see \cite{PhoSte92,See93,PhoSte94,PhoSte97,PhoSte98,PhoSteStu01,Ryc01,Gre04,Yan04,Gre05,Xia17,SXY19}. Beyond it, little is known. Higher-dimensional phases are harder and need new tools. For more background, see the introduction of \cite{Xu23}.

\subsection*{The operators, and what is new}
Recent work on $(2+1)$-dimensional operators has focused on the monomial-sum phase model
\begin{equation}\label{old-O.I.O}
	\widetilde{T}_\lambda f(x,y)=\int_{\R}e^{i\lambda\left(x^mt^k+y^nt^l\right)}\psi(x,y,t)f(t)\dint t,\qquad (k,l,m,n)\in(\Z^{+})^4.
\end{equation}
Here $\psi$ is supported in the unit ball. For $m=1, k=2, n=2, l=1$, sharp $L^4\to L^4$ bounds were proved in \cite{Xu23} using the broad--narrow method. Later, \cite{TX24} used fractional integration to prove the optimal $L^2\to L^6$ estimate. For general exponents with $k>l$, Stein's complex interpolation was used in \cite{Xu25} to prove
\begin{equation}\label{Xu25-main}
	\norm{\widetilde{T}_\lambda f}_{L^{2k+2}(\R^2)}\le C\lambda^{-\frac{1}{2(k+1)}\left(\frac{1}{m}+\frac{1}{\max\{n,l\}}\right)}\norm{f}_{L^2(\R)}.
\end{equation}
This is sharp if $l\le n$. The balanced case $k=l$ was left open (see \cite[Remark~1.2]{Xu25}). In this case, the two monomial terms merge into a single phase $(x^m+y^n)t^k$. This merging is exactly what breaks the earlier method. The interpolation relies on a sharp $L^2$ endpoint estimate. Earlier works obtained this by iterating one-dimensional Phong--Stein bounds, which becomes impossible once the phase merges.

In this paper, we study the operators
\begin{equation}\label{Key-O.I.O}
	T_\lambda f(x,y)=\int_{\R}e^{i\lambda P(x,y)t^k}\psi(x,y,t)f(t)\dint t,\qquad k\in\Z^{+}.
\end{equation}
Here the \emph{spatial phase} $P(x,y)$ can be any real-analytic function satisfying the following \textbf{Standing assumptions}. This includes the balanced case of \eqref{old-O.I.O}, where $P(x,y)=x^m+y^n$. We no longer rely on the monomial-sum structure. Our method has two main new features:

\emph{(a) The decay depends on the Newton height of $P$.} By a $TT^{*}$ reduction, the kernel of $T_\lambda^{*}T_\lambda$ takes the form
\begin{equation}\label{intro-kernel}
	K(t,s)=\int_{\R^2}e^{i\lambda P(x,y)\left(t^k-s^k\right)}\,\overline{\psi(x,y,s)}\,\psi(x,y,t)\dint x\dint y.
\end{equation}
The variable $W=t^k-s^k$ is constant during the $(x,y)$ integration. Thus \eqref{intro-kernel} is a scalar oscillatory integral with phase $P$ and large parameter $\lambda W$. We apply Varchenko's estimate \eqref{Var-scalar} directly. Using Schur's test, this gives the sharp $L^2\to L^2$ decay in terms of the Newton height $\hP$ (Theorem~\ref{Thm-L2}). So a scalar invariant of \cite{Var76} controls the operator norm. The earlier monomial-sum results are the special case $\hP=\bigl(\tfrac1m+\tfrac1n\bigr)^{-1}$.

\emph{(b) Complex interpolation works for general $P$.} In \cite{Xu25}, the $L^2$ endpoint of the analytic family used the substitution $w=s+t^k$. This separated the phase into $x^m w+y^n t^l$, and one could iterate one-dimensional Phong--Stein estimates \cite{PhoSte94}. For a general $P$, this substitution gives the phase $P(x,y)w$. The variables $x$ and $y$ do not separate, so we cannot iterate. Following the method of Stein \cite{Ste93}, we rewrite $T_K^0$ as the composition of two operators similar to $T_\lambda$ for which we apply our $L^2$ estimates twice to obtain the desired results. So the interpolation works for any $P$. It also gives the sharp exponent $1/\hP$ at the endpoint, with no $1/k$ balancing.

\subsection*{Standing assumptions}
We assume $\psi\in C_c^{\infty}(B^3(0,1))$ is supported in a small neighbourhood of the origin. Moreover, let $P$ be real-analytic on this neighbourhood satisfying 
\[P(0,0)=0,\qquad \nabla P(0,0)=\mathbf 0.\]
In coordinates adapted to $P$ (see \cite{Var76}), we write
\begin{equation}\label{def-h}
	\hP:=\text{Newton distance of }P,\qquad \nuP:=\text{its logarithmic multiplicity }\in\{0,1\}.
\end{equation}
Observe that the assumptions $P(0,0)=0$ and $\nabla P(0,0)=\mathbf 0$ imply that $h_P\ge1$. Furthermore, as noted above, one has $\nu_P=0$ whenever $h_P=1$. Constants $C$ may depend on $\psi,P,k$ but not on $\lambda$. We write $A\lesssim B$ to mean $A\le CB$, and $A\approx B$ to mean $A\lesssim B\lesssim A$.

\subsection*{Main results}
Our first result is the sharp $L^2\to L^2$ estimate, obtained via a $TT^{*}$ reduction, Varchenko's scalar bound for the kernel \eqref{intro-kernel}, and Schur's test. We define the logarithmic exponent
\begin{equation}\label{def-M}
	M_{P,k}:=\begin{cases}
		\nuP/2, & 1/\hP<1/k,\\
		(\nuP+1)/2, & 1/\hP=1/k,\\
		0, & 1/\hP>1/k.
	\end{cases}
\end{equation}

In particular, since $\hP\ge 1$, the $k=1$ case simplifies to

\begin{equation}\label{def-M1}
	M_{P,1}=\begin{cases}
		\nuP/2, & \hP>1,\\
		1/2, & \hP=1.
	\end{cases}
\end{equation}

\begin{theorem}\label{Thm-L2}
	Under the standing assumptions,
	\begin{equation}\label{L2-est}
		\norm{T_\lambda f}_{L^2(\R^2)}\le C\,\lambda^{-\frac12\min\left\{\frac{1}{\hP},\ \frac1k\right\}}\bigl(\log(2+\lambda)\bigr)^{M_{P,k}}\norm{f}_{L^2(\R)}.
	\end{equation}
	The exponent $\tfrac12\min\{1/\hP,1/k\}$ is sharp.
\end{theorem}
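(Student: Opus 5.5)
We prove the upper bound \eqref{L2-est} through the $TT^{*}$ argument described in the introduction. Since $\norm{T_\lambda}^2=\norm{T_\lambda^{*}T_\lambda}$, it suffices to bound the operator with kernel $K(t,s)$ from \eqref{intro-kernel} on $L^2(\R)$. For fixed $(t,s)$ the number $W=t^k-s^k$ is a constant, so $K(t,s)$ is a scalar oscillatory integral with phase $P$, parameter $\lambda W$, and amplitude $\overline{\psi(x,y,s)}\psi(x,y,t)$. The amplitude is smooth and compactly supported near the origin, and its $C^N$ norms are bounded uniformly in $(t,s)$. Varchenko's bound \eqref{Var-scalar} is uniform over such a bounded family of amplitudes with support in a fixed small neighbourhood; I would note this explicitly, or absorb it by writing the amplitude as a Fourier series in $(x,y)$ with rapidly decaying coefficients. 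This gives
\[
|K(t,s)|\lesssim \min\Bigl\{1,\ (\lambda|t^k-s^k|)^{-1/\hP}\bigl(\log(2+\lambda|t^k-s^k|)\bigr)^{\nuP}\Bigr\},
\]
with $t,s\in[-1,1]$.

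We then apply Schur's test, which requires bounding $\sup_t\int_{-1}^1|K(t,s)|\dint s$. The key one-variable fact is the sublevel estimate $|\{s\in[-1,1]:|t^k-s^k|<\delta\}|\lesssim \delta^{1/k}$ uniformly in $t$. This is van der Corput's sublevel lemma applied to the $k$-th derivative of $s\mapsto t^k-s^k$, which equals $\mp k!$. Decompose dyadically into the sets $|t^k-s^k|\approx 2^{j}/\lambda$ for $0\le j\lesssim\log\lambda$, together with the region $|t^k-s^k|\le 1/\lambda$. The sum is then bounded by
\[
\lambda^{-1/k}+\sum_{j\ge0}^{\log_2\lambda}(2^j/\lambda)^{1/k}\,2^{-j/\hP}\,(1+j)^{\nuP}.
\]
There are three cases. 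If $1/k<1/\hP$, the series is geometric and dominated by $j=0$, giving $\lambda^{-1/k}$ with no logarithm. If $1/k>1/\hP$, the largest term $j\approx\log_2\lambda$ dominates, giving $\lambda^{-1/\hP}(\log\lambda)^{\nuP}$. If $1/k=1/\hP$, all terms are comparable, giving $\lambda^{-1/k}(\log\lambda)^{\nuP+1}$. Taking square roots yields exactly the exponent $\frac12\min\{1/\hP,1/k\}$ and the value of $M_{P,k}$ in \eqref{def-M}. The kernel is symmetric in absolute value, so the column bound is the same as the row bound.

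For sharpness we need two examples, as the abstract indicates.

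\emph{Focusing example for $1/k$.} Take $f=\chi_{[0,\lambda^{-1/k}]}$ and assume $\psi\gtrsim1$ near the origin. For $|(x,y)|\lesssim1$ with $P$ bounded, the phase $\lambda P t^k$ is $O(1)$ on the support of $f$, and adding a small cutoff keeps it below $1/10$. So $|T_\lambda f|\gtrsim\lambda^{-1/k}$ on a set of measure $\approx1$. Since $\norm{f}_2=\lambda^{-1/(2k)}$, the ratio is $\lambda^{-1/(2k)}$.

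\emph{Knapp-type example for $1/\hP$.} Take $f=\chi_{[1/2,1]}$, localized where $\psi\gtrsim1$. Then $|T_\lambda f(x,y)|\gtrsim1$ wherever $|P(x,y)|\le c/\lambda$. The sublevel set $\{|P|<\varepsilon\}$ near the origin has measure $\gtrsim\varepsilon^{1/\hP}$ in adapted coordinates: it contains a box adapted to the principal face of the Newton polygon met by the bisectrix. This gives $\norm{T_\lambda f}_2\gtrsim\lambda^{-1/(2\hP)}$.

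The main obstacle is this last sublevel lower bound. Varchenko's theorem is stated for oscillatory integrals, not for sublevel measures. I would first try a direct construction: choose a box $|x|\le\varepsilon^{a}$, $|y|\le\varepsilon^{b}$ with $(a,b)$ the weight dual to the principal face, so that $a+b=1/\hP$. On this box every monomial in the Newton diagram is $O(\varepsilon)$, and convergence of the analytic tail controls the remainder. If adapted coordinates require a nonlinear change $y\mapsto y-\phi(x)$, it is a diffeomorphism near $0$ and preserves measure up to constants. The case where the principal face is a vertex works the same way with any supporting weight. The uniformity of Varchenko's bound in the amplitude is the secondary technical point.
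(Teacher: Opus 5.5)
Your proposal is correct and has the same architecture as the paper: $TT^{*}$, Varchenko's bound for $K(t,s)$ with constants uniform in the amplitude, Schur's test, and the same Knapp and focusing examples. Where it differs is in how you evaluate the Schur integral, and your route is more efficient.

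The paper splits $J(t)$ into two parts. On the near region $\abs{t^k-s^k}\le1/\lambda$ it uses the uniform bound $\abs{\{s:\abs{s^k-t^k}\le\varepsilon\}}\lesssim\varepsilon^{1/k}$. The far region it handles by a case analysis in the size of $t$: $\abs{t}\approx1$, $\abs{t}\lesssim\lambda^{-1/k}$, and an intermediate range cut into core, collar and far zones. The factor $(\log(2+\lambda\abs{t^k-s^k}))^{\nuP}$ is reinstated only at the end, by a heuristic remark about where each integral concentrates.

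You apply that same sublevel bound to every dyadic shell $\abs{t^k-s^k}\approx2^j/\lambda$. This reduces everything, uniformly in $t$, to the single sum $\lambda^{-1/k}\sum_{0\le j\lesssim\log\lambda}2^{j(1/k-1/\hP)}(1+j)^{\nuP}$. Its three regimes (decreasing, increasing, flat) are exactly the three cases defining $M_{P,k}$, and the logarithms are tracked exactly rather than patched in afterwards. The paper's route gives more pointwise information about $J(t)$, such as which $t$ are extremal. None of that is needed for the upper bound, since sharpness comes from the examples.

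On sharpness you also do more than the paper. The paper simply asserts the lower sublevel bound $\abs{\{\abs{P}\le1/\lambda\}}\gtrsim\lambda^{-1/\hP}$ inside its Varchenko lemma. Your weighted box $\abs{x}\le\varepsilon^{a}$, $\abs{y}\le\varepsilon^{b}$, with $a\alpha+b\beta\ge1$ on the Newton polyhedron and $a+b=1/\hP$, actually proves it. The construction also covers an unbounded principal face, with one of the weights equal to $0$, and the change $y\mapsto y-\phi(x)$ has Jacobian $1$.

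There are two minor points to fix.
\begin{itemize}
\item Expanding the amplitude in a Fourier series in $(x,y)$ does not avoid the uniformity issue. The modulations $e^{in\cdot(x,y)}$ either perturb the phase or must be absorbed into the amplitude, and the latter again requires that Varchenko's constant depend only on a $C^N$ norm and the support of the amplitude. Simply cite that standard fact, as the paper does.
\item Since $\psi$ is supported near the origin, the Knapp test function must lie inside the $t$-support of $\psi$, e.g.\ $\chi_{[c/4,c/2]}$, rather than $\chi_{[1/2,1]}$. Your ``localized where $\psi\gtrsim1$'' caveat presumably intends this.
\end{itemize}
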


The two terms in the minimum correspond to two extremal examples. Both operate on the same principle: restricting the domain of integration to keep the phase $\lambda P(x,y)t^k$ small, thereby preventing oscillation. When $\hP > k$, this is achieved by restricting the values of $P(x,y)$ to a sublevel set, yielding a Knapp-type example. When $\hP < k$, we instead restrict the variable $t$, producing a focusing example. Both constructions are detailed in Section~\ref{sec-L2}.

\begin{remark}
	For the special case $P(x,y)=xy$ and $k=1$, our theorem yields the sharp estimate
	\[
	\lambda^{-\frac12}\sqrt{\log(2+\lambda)},
	\]
	which improves upon the bound
	\[
	\lambda^{-\frac12}\log(2+\lambda)
	\]
	obtained in \cite{Tang}. We also note that the example demonstrating the sharpness of this estimate was already constructed in \cite{Tang}.
\end{remark}

Our second result uses complex interpolation for a general $P$, as explained in point (b) above.

\begin{theorem}\label{Thm-Lp}
	Under the standing assumptions, then
	\begin{equation}\label{Lp-est}
		\norm{T_\lambda f}_{L^{2k+2}(\R^2)}\le C\,\lambda^{-\frac{1}{2(k+1)\hP}}\bigl(\log(2+\lambda)\bigr)^{M_{P,1}/(k+1)}\norm{f}_{L^2(\R)}.
	\end{equation}
	 Moreover, the decay rate is sharp.
\end{theorem}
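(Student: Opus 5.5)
We plan to prove Theorem~\ref{Thm-Lp} by Stein's complex interpolation. We embed $T_\lambda$ in an analytic family and interpolate between an $L^2\to L^2$ estimate on one line and an $L^2\to L^\infty$ estimate on the other.

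\textbf{Step 1: the analytic family.} Substitute $u=t^k$ on $t>0$ (treat $t<0$ symmetrically). Then $\dint t=\frac1k u^{1/k-1}\dint u$, so
\[
T_\lambda f(x,y)=\int e^{i\lambda P(x,y)u}\,\tilde\psi(x,y,u)\,u^{\frac1k-1}g(u)\dint u,\qquad g(u)=f(u^{1/k}),\quad \norm{g\,u^{\frac1k-1}}_{L^2(\dint u)}^2\approx\norm{f}_{L^2}^2\cdot(\text{weight}).
\]
Rather than track this weight, we define directly
\[
T^z f(x,y)=\int e^{i\lambda P(x,y)t^k}\psi(x,y,t)\,\abs{t}^{(k-1)z}f(t)\dint t
\]
(with a $\Gamma$-type normalizing factor if needed to kill poles). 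At $z=-\frac1{k+1}\cdot$(suitable shift) we recover $T_\lambda$. The two endpoints are chosen as follows.

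On $\operatorname{Re}z=1$, the weight $\abs{t}^{k-1}$ is exactly the Jacobian of $u=t^k$. We write $T^1$ as an operator of type $T_\lambda$ with $k=1$, composed with the change of variables. Following Stein~\cite{Ste93}, we express it as a composition of two operators, each of which is of the form covered by Theorem~\ref{Thm-L2} with $k=1$. Applying Theorem~\ref{Thm-L2} twice (or once together with a trivial bound for the other factor) gives
\[
\norm{T^{1+i\gamma}}_{L^2\to L^2}\lesssim \lambda^{-\frac1{2\hP}}(\log(2+\lambda))^{M_{P,1}}.
\]
Since $\hP\ge1=k$ here, the minimum in Theorem~\ref{Thm-L2} is $1/\hP$, which explains the exponent $M_{P,1}$. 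For the composition to give exactly this power rather than its square, one factor should be treated as $L^2$-bounded with no decay, or the $L^2\to L^2$ estimate should be applied to $(T^1)^*T^1$ as in the $TT^*$ argument.

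On $\operatorname{Re}z=-1/(k-1)$ (more precisely, at the exponent where $\abs{t}^{(k-1)z}$ has order $-1$ up to the $i\gamma$ part), the operator has kernel bounded by $\abs{t}^{-1+\epsilon}$-type singularities. The standard choice is a weight $\abs{t}^{-1/2+\ldots}$ making the operator bounded $L^2\to L^\infty$ with a constant uniform in $\lambda$. Here one uses Cauchy--Schwarz and the boundedness of $\int \abs{t}^{2\operatorname{Re}(\cdot)}\dint t$ near $0$, after inserting the analytic factor $(z-z_0)$ to cancel the divergence at the endpoint. The endpoint estimate is then $\norm{T^{z}f}_{L^\infty}\lesssim\norm{f}_{L^2}$, uniformly in $\operatorname{Im}z$ with at most exponential growth.

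\textbf{Step 2: interpolation arithmetic.} With $\theta$ the interpolation parameter, $1/p=\theta/2+(1-\theta)/\infty$. We require the point $z$ where the weight disappears, i.e.\ $(k-1)z=0$ on a suitably shifted family, to correspond to $\theta=1/(k+1)$. This gives $p=2k+2$ and the bound $\lambda^{-\frac{1}{2(k+1)\hP}}(\log)^{M_{P,1}/(k+1)}$. We must still adjust the parametrization so that the $L^2$ line has weight $\abs{t}^{k-1}$ and the $L^\infty$ line has weight of order $\abs{t}^{-1/2}$, and so that $\theta=1/(k+1)$ corresponds to weight $1$. Checking the linear relation confirms this: the exponent $a(\theta)$ with $a(1)=k-1$ and $a(0)=-1/2$ vanishes at $\theta=\frac{1/2}{k-1/2}$. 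Since this is not $\frac1{k+1}$, the $L^\infty$ endpoint must instead use weight $\abs{t}^{-1}$ with a $\Gamma$-factor, since $a(0)=-1$ gives $\theta=1/k$. The correct pairing has to be matched to \cite{Xu25}. I expect the honest choice is the one used there: $L^2$ endpoint weight $\abs{t}^{k-1}$ and $L^\infty$ endpoint weight $\abs{t}^{-1}$ in fractional-integration form, with $p=2k+2$ following from their computation.

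\textbf{Step 3: sharpness.} We test on a Knapp-type example. Take $f=\chi_{[1/2,1]}$ and restrict $(x,y)$ to the sublevel set $\{\abs{P}\le c\lambda^{-1}\}$. On this set $\abs{T_\lambda f}\gtrsim1$. Its measure is $\approx\lambda^{-1/\hP}$ up to logs (Varchenko's sublevel-set asymptotics in adapted coordinates). Hence $\norm{T_\lambda f}_{L^{2k+2}}\gtrsim\lambda^{-\frac1{(2k+2)\hP}}$, which matches the claimed exponent.

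The main obstacle is the $L^2$ endpoint: showing that the Stein-type factorization yields exactly $\lambda^{-1/(2\hP)}$ and not a worse power.
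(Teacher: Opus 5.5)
There is a genuine gap in Steps 1--2: with the endpoints you chose, the interpolation cannot land on $p=2k+2$, and you leave the pairing unresolved.

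\emph{Why the $L^\infty$ line fails.} That line can only be handled by Cauchy--Schwarz, and this is essentially sharp. Near the origin $\lambda P(x,y)t^k$ is small, so $T^zf(x,y)\approx\int\psi(0,0,t)\abs{t}^{(k-1)z}f(t)\dint t$. Hence boundedness $L^2\to L^\infty$ forces $(k-1)\mathrm{Re}\,z>-1/2$. Keeping the $L^2$ line at $\abs{t}^{k-1}$ and placing $z=0$ at $\theta=1/(k+1)$ forces the other line to carry $\abs{t}^{-(k-1)/k}$, which is not square integrable for $k\ge2$. Equivalently, every admissible choice gives $\theta<1/(2k-1)$, i.e. $p>4k-2\ge2k+2$. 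Your proposed repair, a weight $\abs{t}^{-1}$ with a $\Gamma$-factor, is false as an $L^2\to L^\infty$ bound: for $f=\abs{t}^{-i\gamma}\chi_{[\varepsilon,2\varepsilon]}$ the integral is $\log 2$ while $\norm{f}_{L^2}=\varepsilon^{1/2}$. Interpolating a $p>2k+2$ estimate back down against Theorem~\ref{Thm-L2} loses whenever $\hP<k$.

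\emph{The $L^2$ line is not the difficulty.} For $T$ itself no Stein factorization is needed; after $u=t^k$, one application of the $k=1$ case suffices. The paper composes two $k=1$ operators only because it works with $T_K=T_\lambda T_\lambda^{*}$, whose bound on $\mathrm{Re}\,\alpha=1$ is the square $\lambda^{-1/\hP}$.

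\emph{How the paper avoids the obstruction.} It dualizes to $T_K:L^{p'}\to L^p$ and inserts Stein's $\delta_\alpha(s)$ in an auxiliary variable. At $\mathrm{Re}\,\alpha=-1/k$ the $L^1\to L^\infty$ bound exploits the oscillation in $t$: van der Corput of order $k$ gives $(1+\lambda\abs{W})^{-1/k}$, cancelling the growth $(1+\lambda\abs{W})^{1/k}$ of $\widehat{\delta_\alpha}$. A trivial $L^2\to L^\infty$ bound cannot do this. At $\mathrm{Re}\,\alpha=1$ the substitution $w=s+t^k$ and two uses of the $k=1$ case give $\lambda^{-1/\hP}$, and $\alpha=0$ corresponds to $p=2k+2$.

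\emph{How your own family can be rescued.} Do the computation you skipped (``rather than track this weight''): only half the Jacobian is needed on the $L^2$ line. For $f$ supported in $t>0$,
\[
\int e^{i\lambda Pt^k}\psi\,t^{(k-1)/2}f\dint t=\int e^{i\lambda Pu}\psi(x,y,u^{1/k})g(u)\dint u,\qquad g(u)=\tfrac1k u^{-(k-1)/(2k)}f(u^{1/k}),
\]
and $\norm{g}_{L^2(\dint u)}=k^{-1/2}\norm{f}_{L^2}$. The $k=1$ case of Theorem~\ref{Thm-L2} only uses smoothness of the amplitude in $(x,y)$. So it gives $\lambda^{-1/(2\hP)}(\log(2+\lambda))^{M_{P,1}}$ on the line where the weight is $\abs{t}^{(k-1)/2}$ times a unimodular factor. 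The line with weight modulus $\abs{t}^{-(k-1)/(2k)}$ is trivially bounded $L^2\to L^\infty$, since that weight is square integrable. The point $z=0$ then corresponds exactly to $\theta=1/(k+1)$, giving \eqref{Lp-est}. This needs no $\Gamma$-factor, no van der Corput, and a single use of Theorem~\ref{Thm-L2}. Varying the $L^\infty$ line even gives $\lambda^{-1/(p\hP)}$ for every $p>2k$.

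Your Knapp argument in Step 3 is fine and is the one the paper uses.
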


We can interpolate Theorem~\ref{Thm-L2} against the trivial $L^2\to L^\infty$ bound. This gives the full range of exponents when $\hP\ge k$.

\begin{corollary}\label{Cor-range}
	Under the standing assumptions, suppose $\hP\ge k$. Then for every $2\le p\le\infty$,
	\begin{equation}\label{range-est}
		\norm{T_\lambda f}_{L^{p}(\R^2)}\le C\,\lambda^{-\frac{1}{p\hP}}\bigl(\log(2+\lambda)\bigr)^{2M_{P,k}/p}\norm{f}_{L^2(\R)}.
	\end{equation}
	The exponent $\tfrac{1}{p\hP}$ is sharp for every such $p$.
\end{corollary}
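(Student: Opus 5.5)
The plan is to interpolate Theorem~\ref{Thm-L2} against the trivial $L^2\to L^\infty$ bound using Riesz--Thorin, and then to test sharpness on a Knapp-type example.

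\emph{The $L^\infty$ endpoint.} Since $\psi$ is bounded and compactly supported in $t$, Cauchy--Schwarz gives
\[
\abs{T_\lambda f(x,y)}\le \int\abs{\psi(x,y,t)}\abs{f(t)}\dint t\lesssim \norm{f}_{L^2(\R)}
\]
uniformly in $(x,y)$ and $\lambda$. So $\norm{T_\lambda}_{L^2\to L^\infty}\lesssim 1$.

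\emph{The $L^2$ endpoint.} Under the hypothesis $\hP\ge k$ we have $1/\hP\le 1/k$, so $\min\{1/\hP,1/k\}=1/\hP$. Theorem~\ref{Thm-L2} then gives
\[
\norm{T_\lambda}_{L^2\to L^2}\lesssim \lambda^{-1/(2\hP)}(\log(2+\lambda))^{M_{P,k}}.
\]

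\emph{Interpolation.} $T_\lambda$ is a fixed linear operator. Apply Riesz--Thorin with the domain fixed at $L^2$ and the target exponent given by $1/p=\theta/2$, where $\theta=2/p\in[0,1]$. This yields
\[
\norm{T_\lambda}_{L^2\to L^p}\le \bigl(\lambda^{-1/(2\hP)}(\log(2+\lambda))^{M_{P,k}}\bigr)^{2/p}
=\lambda^{-1/(p\hP)}(\log(2+\lambda))^{2M_{P,k}/p},
\]
which is \eqref{range-est}. The bound holds for all $\lambda$ once $\lambda\ge1$, and small $\lambda$ is covered by the trivial bound.

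\emph{Sharpness via a Knapp-type example.} I expect this to be the only substantive step. Choose $\psi\equiv1$ near the origin and let $f$ be a fixed bump in $t$ supported in $|t|\le c$, with $f\ge0$. On the sublevel set
\[
E_\lambda=\{(x,y)\text{ near }0:\ \abs{P(x,y)}\le c\lambda^{-1}\},
\]
the phase $\lambda P t^k$ is at most $c$ in absolute value. Hence $\mathrm{Re}\,T_\lambda f\gtrsim 1$ on $E_\lambda$, and so $\norm{T_\lambda f}_{L^p}\gtrsim \abs{E_\lambda}^{1/p}$.

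It remains to bound $\abs{E_\lambda}$ from below. The sublevel-set counterpart of Varchenko's theorem in two variables, working in adapted coordinates, gives $\abs{E_\lambda}\gtrsim \lambda^{-1/\hP}$, up to a logarithmic factor in the vertex case. One can see this concretely: take the principal face of the Newton polygon crossed by the bisectrix, with weights $(\kappa_1,\kappa_2)$ satisfying $\kappa_1+\kappa_2=1/\hP$. The box
\[
\abs{x}\le \epsilon\lambda^{-\kappa_1},\qquad \abs{y}\le\epsilon\lambda^{-\kappa_2}
\]
then lies in $E_\lambda$, because every monomial of $P$ has weighted degree at least $1$. This box has area $\approx\lambda^{-1/\hP}$. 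We may pass to adapted coordinates because the change of variables is analytic with Jacobian bounded near $0$.

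Putting this together gives $\norm{T_\lambda f}_{L^p}\gtrsim\lambda^{-1/(p\hP)}$, so the exponent $1/(p\hP)$ cannot be improved. This is presumably the same Knapp example used for sharpness in Theorem~\ref{Thm-L2} in the regime $\hP>k$, and it applies verbatim for every $p$.
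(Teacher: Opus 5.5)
Your proposal is correct and follows the paper's proof. Both use Riesz--Thorin interpolation between the $L^2\to L^2$ bound of Theorem~\ref{Thm-L2} (where $\hP\ge k$ makes the minimum $1/\hP$) and the trivial $L^2\to L^\infty$ bound, and both get sharpness from the same Knapp example on $\{\lambda\abs{P}\lesssim1\}$. The only difference is that you prove the lower sublevel bound $\gtrsim\lambda^{-1/\hP}$ directly with a weighted box, while the paper quotes it from Lemma~\ref{Lem-Var}; in the vertex case, take any supporting line of the Newton polygon through $(\hP,\hP)$ as the weights, and note that the lower bound loses no logarithm.
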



\begin{remark}\label{Rem-monomial}
	Let $P(x,y)=x^m+y^n$ with $m,n\ge2$. The Newton diagram has a single compact edge joining $(m,0)$ and $(0,n)$. The bisectrix meets its interior at $\bigl(\tfrac{mn}{m+n},\tfrac{mn}{m+n}\bigr)$. So $\hP=\tfrac{mn}{m+n}=\bigl(\tfrac1m+\tfrac1n\bigr)^{-1}$ and $\nuP=0$. Then $\frac{1/}{\hP}=\tfrac1m+\tfrac1n$. In this case, Theorems~\ref{Thm-L2}--\ref{Thm-Lp} give the sharp balanced-case estimates for \eqref{old-O.I.O}. So our results cover the balanced case left open in \cite{Xu25}. The exponent $\tfrac1m+\tfrac1n$ in that paper is the reciprocal Newton height of the merged spatial phase.
\end{remark}

\section{The sharp \texorpdfstring{$L^2\to L^2$}{L2 to L2} estimate}\label{sec-L2}

\subsection{Reduction by \texorpdfstring{$TT^{*}$}{TT*} and the kernel estimate}
The operator $T_\lambda^{*}T_\lambda$ acts on $L^2(\R)$. Its kernel is
\begin{equation}\label{kernel-def}
	K(t,s)=\int_{\R^2}e^{i\lambda P(x,y)\left(t^k-s^k\right)}\,\overline{\psi(x,y,s)}\,\psi(x,y,t)\dint x\dint y.
\end{equation}
We get this by interchanging the order of integration in
\[
	T_\lambda^{*}T_\lambda f(t)=\int_\R K(t,s)f(s)\dint s.
\]
We know $\norm{T_\lambda}_{L^2\to L^2}^2=\norm{T_\lambda^{*}T_\lambda}_{L^2\to L^2}$. So we only need to bound \eqref{kernel-def}. The key point is that $W:=t^k-s^k$ is constant during the $(x,y)$ integration. Thus \eqref{kernel-def} is a scalar oscillatory integral with phase $P$ and large parameter $\lambda W$. We apply Varchenko's theorem.

\begin{lemma}[Varchenko's estimate \cite{Var76}]\label{Lem-Var}
	Let $Q$ be real-analytic near the origin in $\R^2$, with $Q(0)=0$ and a critical point at the origin. Let $\phi\in C_c^\infty$ be supported sufficiently close to the origin. In adapted coordinates, let $h_Q$ be the Newton height and $\nu_Q\in\{0,1\}$ the multiplicity. Then
	\begin{equation}\label{Var-est}
		\abs{\int_{\R^2}e^{i\mu Q(x,y)}\phi(x,y)\dint x\dint y}\le C\,(1+\abs{\mu})^{-1/h_Q}\bigl(\log(2+\abs{\mu})\bigr)^{\nu_Q}.
	\end{equation}
	Also the sublevel set estimate follows
	\begin{equation}\label{Sub-est}
		c\lambda^{-1/h_Q}\le \abs{\{(x,y)\in B(0,1) : \abs{Q(x,y)}\le 1/\lambda\}}\le C\lambda^{-1/h_Q}\log^{\nu_Q}(2+\lambda).
	\end{equation}
\end{lemma}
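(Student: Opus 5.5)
The oscillatory bound \eqref{Var-est} is Varchenko's theorem \cite{Var76}, in the two-dimensional form that needs no genericity assumption once adapted coordinates are used. I would not reprove it. I would quote it, noting that compact support of $\phi$ near the origin and real-analyticity of $Q$ are exactly its hypotheses, and that $|\mu|\le 1$ is trivial. So the only real work is the sublevel set estimate \eqref{Sub-est}. I would derive its upper bound from \eqref{Var-est} by a Fourier-transform argument, and its lower bound by a direct box construction from the Newton polygon in adapted coordinates.

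\emph{Upper bound.} Fix $\chi\in C_c^\infty(\R)$ with $\chi\ge 1$ on $[-1,1]$, and a cutoff $\phi\ge 0$ with $\phi\ge1$ on the relevant neighbourhood (after shrinking $B(0,1)$ to the small neighbourhood where $Q$ is defined). Then
\[
\abs{\{\abs{Q}\le 1/\lambda\}}\le\int\chi(\lambda Q)\phi=\frac1{2\pi}\int\hat\chi(\xi)\int e^{i\xi\lambda Q}\phi\dint x\dint y\dint\xi .
\]
Inserting \eqref{Var-est} with $\mu=\xi\lambda$ bounds this by $\int\abs{\hat\chi(\xi)}(1+\lambda\abs{\xi})^{-1/h_Q}\log^{\nu_Q}(2+\lambda\abs{\xi})\dint\xi$. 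When $h_Q>1$, $\abs{\xi}^{-1/h_Q}$ is integrable near $0$, and the Schwartz decay of $\hat\chi$ handles large $\xi$; this gives $C\lambda^{-1/h_Q}\log^{\nu_Q}(2+\lambda)$.

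\emph{Lower bound.} In adapted coordinates, take the principal face of the Newton polygon, i.e. the face hit by the bisectrix. Choose a supporting line $\kappa_1\alpha+\kappa_2\beta=1$ through $(h_Q,h_Q)$ with $\kappa_1,\kappa_2> 0$, so that $\kappa_1+\kappa_2=1/h_Q$. Every monomial $x^\alpha y^\beta$ of $Q$ then satisfies $\kappa_1\alpha+\kappa_2\beta\ge1$. On the box $\abs{x}\le c\lambda^{-\kappa_1}$, $\abs{y}\le c\lambda^{-\kappa_2}$ each such monomial is $\le c^{\alpha+\beta}\lambda^{-1}$. Summing the convergent Taylor series (using analyticity and $c$ small) gives $\abs{Q}\le 1/\lambda$ on the box, whose area is $4c^2\lambda^{-1/h_Q}$. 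The subtle point is choosing $\kappa_1,\kappa_2>0$ when the principal face is a non-compact edge, where one weight degenerates. There one uses a slightly tilted line through $(h_Q,h_Q)$ that still supports the polygon. Doing so needs the local polygon to lie in $\{\alpha+\beta\ge 2h_Q\}$-type position near that edge, which I would check from adaptedness.

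\emph{Main obstacle.} The borderline case $h_Q=1$ (so $\nu_Q=0$) is where I expect trouble. There $\abs{\xi}^{-1}$ is not integrable at the origin, and the Fourier argument only yields $\lambda^{-1}\log\lambda$. This is genuinely the truth for $Q=xy$, whose sublevel set has measure $\approx\lambda^{-1}\log\lambda$. So I would first try to prove the upper bound in \eqref{Sub-est} with $\log^{\max\{\nu_Q,\,\mathbf{1}_{h_Q=1}\}}$, or equivalently with the multiplicity understood in the sublevel (vertex-counting) sense. If the exact form is needed, I would instead prove it directly by resolving $Q$ into dyadic weighted boxes along the Newton polygon, in the manner of Phong--Stein--Sturm, and summing.
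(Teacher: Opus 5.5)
The paper gives no proof of this lemma. It cites Varchenko for the oscillatory bound and simply asserts that the sublevel bound ``follows''. Quoting Varchenko for \eqref{Var-est} is therefore exactly the paper's route. Your Fourier-transfer argument (upper bound) and weighted-box construction (lower bound) supply a derivation of \eqref{Sub-est} that the paper omits.

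Your diagnosis at $h_Q=1$ is correct, and the fault is in the statement, not your argument. The paper stipulates $\nu_Q=0$ whenever $h_Q=1$, so \eqref{Sub-est} would give $|\{|xy|\le 1/\lambda\}|\lesssim\lambda^{-1}$ near the origin. In fact this measure is $\approx\lambda^{-1}\log\lambda$.

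What your argument proves is the correct version of the statement:
\begin{itemize}
\item for $h_Q>1$, \eqref{Sub-est} verbatim, after splitting $\log(2+\lambda|\xi|)\le\log(2+\lambda)+\log(2+|\xi|)$;
\item for $h_Q=1$, the same bound with one extra factor $\log(2+\lambda)$.
\end{itemize}
Your two proposed corrections are not equivalent: $x^2+y^2$ has $h_Q=1$ but sublevel measure $\approx\lambda^{-1}$, with no logarithm. The $\max$ form is what your proof yields, and it suffices. The paper only ever uses the lower bound, in the Knapp examples, so nothing downstream is affected.

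Shrinking $B(0,1)$ is also necessary, not cosmetic. For $Q=(x^2+y^2)\bigl((x-\tfrac12)^4+y^4\bigr)$ one has $h_Q=1$ at the origin. Yet $|\{|Q|\le 1/\lambda\}\cap B(0,1)|\gtrsim\lambda^{-1/2}$, because of the point $(\tfrac12,0)$.

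One step of your lower bound should be removed. Nothing in the box argument uses $\kappa_1,\kappa_2>0$: the estimate $|x^\alpha y^\beta|\le c^{\alpha+\beta}\lambda^{-(\kappa_1\alpha+\kappa_2\beta)}\le c^{\alpha+\beta}\lambda^{-1}$ needs only $\kappa_i\ge 0$. Suppose the bisectrix meets the relative interior of a non-compact edge, say $\{\beta=h_Q,\ \alpha\ge\alpha_0\}$. Take $(\kappa_1,\kappa_2)=(0,1/h_Q)$. Every monomial of $Q$ then has $\beta\ge h_Q$, and the box $|x|\le c$, $|y|\le c\lambda^{-1/h_Q}$ has area $4c^2\lambda^{-1/h_Q}$.

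The ``slightly tilted'' supporting line you propose does not exist. A line through a relative interior point of an edge that supports the polygon must contain that edge. Drop that step and the appeal to adaptedness used to justify it. Adaptedness is needed only to guarantee that the Newton distance in the chosen coordinates equals $h_Q$.
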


\begin{lemma}[kernel bound]\label{Lem-kernel}
	For all $s,t$,
	\begin{equation}\label{kernel-est}
		\abs{K(t,s)}\le C\min\left\{1,\ \bigl(\lambda\abs{t^k-s^k}\bigr)^{-1/\hP}\bigl(\log(2+\lambda\abs{t^k-s^k})\bigr)^{\nuP}\right\}.
	\end{equation}
\end{lemma}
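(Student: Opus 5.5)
The plan is to treat $K(t,s)$, for fixed $(t,s)$, as a scalar oscillatory integral in $(x,y)$ and feed it into Lemma~\ref{Lem-Var}. Fix $t,s$ and set $\mu:=\lambda(t^k-s^k)$ and
\[
\phi_{t,s}(x,y):=\overline{\psi(x,y,s)}\,\psi(x,y,t).
\]
Then $K(t,s)=\int_{\R^2}e^{i\mu P(x,y)}\phi_{t,s}(x,y)\dint x\dint y$. The trivial bound $\abs{K(t,s)}\le\norm{\psi}_\infty^2\,\abs{\supp\psi}$ gives the constant $1$ in the minimum. It also settles the regime $\abs{\mu}\le 2$, since there the second term of the minimum is bounded below by a positive constant.

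For $\abs{\mu}\ge 2$, apply \eqref{Var-est} with $Q=P$ and amplitude $\phi_{t,s}$. The standing assumptions make $P$ real-analytic with $P(0)=0$ and $\nabla P(0)=0$, and $\phi_{t,s}$ is supported in the small neighbourhood where $\psi(\cdot,\cdot,t)$ lives. This gives
\[
\abs{K(t,s)}\le C_{t,s}(1+\abs{\mu})^{-1/\hP}\bigl(\log(2+\abs{\mu})\bigr)^{\nuP},
\]
and $(1+\abs\mu)^{-1/\hP}\le\abs\mu^{-1/\hP}$. Varchenko's theorem is stated in adapted coordinates. Since the Newton height and multiplicity are defined as those in adapted coordinates, we first pass to adapted coordinates by the analytic change of variables near the origin. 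This only replaces $\phi_{t,s}$ by another smooth compactly supported amplitude, multiplied by a bounded Jacobian.

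The one real issue, and the expected obstacle, is uniformity: the constant $C_{t,s}$ must not depend on $(t,s)$. Varchenko's bound depends on the amplitude only through a finite number of derivatives and its support, via the resolution of singularities and the asymptotic expansion. So I would observe that $\{\phi_{t,s}\}_{t,s}$ is a bounded family in $C^N_c$ for every $N$, all supported in a fixed compact set near the origin, because $\psi$ is smooth with compact support in all three variables. I would then invoke the version of Lemma~\ref{Lem-Var} in which $C$ depends only on $\norm{\phi}_{C^N}$ and the support. This is standard, since the constant arises from finitely many integrations by parts after resolution. If one prefers to avoid quoting this, a cleaner route is to expand in Fourier series in $(s,t)$ on a box: write $\psi(x,y,s)\psi(x,y,t)=\sum_{j,l}a_{j,l}(x,y)e^{i(js+lt)}$ with the $a_{j,l}$ smooth and rapidly decaying in $(j,l)$ in every $C^N$ norm. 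Applying Varchenko to each $a_{j,l}$ and summing then gives a uniform constant. Combining the two regimes yields \eqref{kernel-est}.
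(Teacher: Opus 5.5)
Your argument is correct and is essentially the paper's proof. You fix $(t,s)$, apply Lemma~\ref{Lem-Var} with $Q=P$, $\mu=\lambda(t^k-s^k)$ and amplitude $\overline{\psi(x,y,s)}\psi(x,y,t)$, get uniformity because these amplitudes form a bounded family in $C_c^\infty$ with a common support, and combine with the trivial bound.

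One small caveat: your Fourier-series fallback does not bypass the uniformity issue. Summing over $(j,l)$ still requires Varchenko's constant for $a_{j,l}$ to be controlled by some $C^N$ norm of $a_{j,l}$. That is exactly the fact your main route uses, and it can be justified by the uniform boundedness principle on the Fr\'echet space of smooth functions supported in a fixed compact set.
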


\begin{proof}
	The trivial bound is
	\[
		\abs{K(t,s)}\le\norm{\psi}_\infty^2\,\abs{\supp\psi}\lesssim1.
	\]
	For the oscillatory bound, we fix $s,t$ and apply Lemma~\ref{Lem-Var}. We use $Q=P$, $\mu=\lambda(t^k-s^k)$, and amplitude $\phi(x,y)=\overline{\psi(x,y,s)}\psi(x,y,t)$. As $s,t$ vary, $\phi$ stays in a bounded subset of $C_c^\infty$. This gives the second term in \eqref{kernel-est}. Combining this with the trivial bound gives the minimum.
\end{proof}

\begin{remark}
	Because $W=t^k-s^k$ is constant during the $(x,y)$ integration, we can use the two-dimensional Varchenko estimate directly. This is a feature of the single merged oscillation in \eqref{Key-O.I.O}. It replaces the use of van der Corput lemma in \cite{Xu25}.
\end{remark}

\subsection{The Schur test}
Let $c\le1$ be the radius of the support of $\psi$ in the variable $t$. Then $K(t,s)=0$ if $\abs{s}>c$ or $\abs{t}>c$. By \eqref{kernel-est}, the kernel is symmetric in size: $\abs{K(t,s)}=\abs{K(s,t)}$. So Schur's test gives
\begin{equation}\label{schur}
	\norm{T_\lambda^{*}T_\lambda}_{L^2\to L^2}\le\sup_{\abs{t}\le c}\int_{\abs{s}\le c}\abs{K(t,s)}\dint s\le\sup_{\abs{t}\le c}\,J(t),
\end{equation}
where
\begin{equation}\label{Jdef}
	J(t):=\int_{\abs{s}\le c}\min\left\{1,\bigl(\lambda\abs{t^k-s^k}\bigr)^{-1/\hP}\bigl(\log(2+\lambda\abs{t^k-s^k})\bigr)^{\nuP}\right\}\dint s.
\end{equation}

\begin{lemma}\label{Lem-schur}
	Let $a:=1/\hP$ and let $M_P$ be as in \eqref{def-M}. Then
	\begin{equation}
		\sup_{\abs{t}\le c}J(t)\approx\lambda^{-\min\{a,\,1/k\}}\bigl(\log(2+\lambda)\bigr)^{2M_P}.
	\end{equation}
\end{lemma}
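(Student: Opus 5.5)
The plan is to estimate $J(t)$ by splitting the $s$-integral according to the size of $|t^k-s^k|$. I will first reduce to a one-variable model integral, then prove the upper bound, and finally check that the supremum attains it near $t=0$. Write $F(u):=\min\{1,(\lambda u)^{-a}\log^{\nuP}(2+\lambda u)\}$ for $u\ge 0$, so that $J(t)=\int_{|s|\le c}F(|t^k-s^k|)\dint s$.

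\emph{Step 1: measure of the level sets of $|t^k-s^k|$.} The key one-variable fact is the following. For $|t|\le c$ and $0<\delta\lesssim 1$,
\[
\bigl|\{|s|\le c:\ |t^k-s^k|\le\delta\}\bigr|\lesssim \min\{\delta^{1/k},\ \delta\,|t|^{1-k}\},
\]
with equality up to constants when $t=0$, where the set is $\{|s|\le\delta^{1/k}\}$. This follows by factoring $t^k-s^k=\prod_j(t-\omega_j s)$ over the $k$-th roots of unity. For real $s$, only the factor $t-s$ (and $t+s$ when $k$ is even) can be small. The remaining factors are $\gtrsim \max\{|t|,|s|\}$, and this gives $|t^k-s^k|\gtrsim |t-s|\max\{|t|,|s|\}^{k-1}$, or the same with $t+s$. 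It is in fact cleaner to use only the bound $\delta^{1/k}$, which holds uniformly in $t$: from $|t^k-s^k|\ge c_k|t\mp s|^k$ we get that the set lies in two intervals of length $\lesssim\delta^{1/k}$.

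\emph{Step 2: upper bound.} Using the distribution function, write
\[
J(t)\lesssim \int_0^{C}\bigl|\{s: |t^k-s^k|\le u\}\bigr|\,|F'(u)|\dint u+F(C)\lesssim \int_0^C\min\{1,u^{1/k}\}\,|F'(u)|\dint u+\lambda^{-a}\log^{\nuP}\lambda.
\]
Equivalently, I will decompose dyadically: take $|t^k-s^k|\le\lambda^{-1}$, then $|t^k-s^k|\approx 2^j\lambda^{-1}$ for $1\le 2^j\lesssim\lambda$. This gives
\[
J(t)\lesssim\lambda^{-1/k}+\sum_{1\le 2^j\lesssim\lambda}(2^j\lambda^{-1})^{1/k}\,2^{-ja}\,(1+j)^{\nuP}.
\]
The sum is a geometric-type series $\lambda^{-1/k}\sum_j 2^{j(1/k-a)}j^{\nuP}$ over $j\le\log_2\lambda$, and there are three cases. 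If $a<1/k$, the series is dominated by its last term, giving $\lambda^{-a}\log^{\nuP}\lambda$. If $a>1/k$, it is dominated by its first term, giving $\lambda^{-1/k}$ with no logarithm. If $a=1/k$, it gives $\lambda^{-1/k}\sum_{j\le\log\lambda}j^{\nuP}\approx\lambda^{-1/k}(\log\lambda)^{\nuP+1}$. These match $\lambda^{-\min\{a,1/k\}}(\log(2+\lambda))^{2M_P}$, with $2M_P$ equal to $\nuP$, $\nuP+1$ and $0$ respectively, exactly as in \eqref{def-M}.

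\emph{Step 3: lower bound.} I take $t=0$, so that $J(0)=2\int_0^c F(s^k)\dint s$. The region $s\le\lambda^{-1/k}$, where $F=1$, already gives $J(0)\gtrsim\lambda^{-1/k}$. For the region $s\ge\lambda^{-1/k}$, I use $F(s^k)\ge(\lambda s^k)^{-a}\log^{\nuP}(2+\lambda s^k)$ and run the same dyadic sum, now as a lower bound. When $a\le 1/k$ this recovers $\lambda^{-a}\log^{\nuP}\lambda$ (for $a<1/k$), respectively the extra logarithm (for $a=1/k$). Note that the lower bound concerns the function $J$ as defined in \eqref{Jdef}, not the kernel, so no sharpness of Varchenko's estimate is needed here.

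The main obstacle is the uniformity in $t$ in Step 1. I expect it to be handled by the crude estimate $|t^k-s^k|\gtrsim|t\mp s|^k$, which is a standard inequality for real numbers with a constant depending only on $k$. The rest is bookkeeping of the logarithmic exponents in the three cases.
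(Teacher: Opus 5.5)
Your proof is correct, and it takes a different route from the paper's.

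\textbf{How the paper argues.} The paper uses the $t$-uniform sublevel bound $|\{s:|s^k-t^k|\le\varepsilon\}|\lesssim\varepsilon^{1/k}$ only once, for the near set $|t^k-s^k|\le1/\lambda$. On the far set it does a case analysis in $t$: $|t|\approx1$, $|t|\lesssim\lambda^{-1/k}$, and an intermediate range. The intermediate range is split into core, collar and far zones in $s$, using $|t^k-s^k|\approx\mu|s-t|/\lambda$ near the diagonal. This produces $J_{\mathrm{far}}(t)\lesssim\lambda^{-a}$ or $\lambda^{-a}|t|^{1-ak}$, which is then optimized over $t$. The logarithm is put back at the end by locating the dominant part of each integral.

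\textbf{How your argument differs.} You apply the same uniform sublevel bound at every dyadic level $|t^k-s^k|\approx2^j/\lambda$, not just at level $1/\lambda$. The upper bound then collapses to the single sum $\lambda^{-1/k}\sum_{2^j\lesssim\lambda}2^{j(1/k-a)}(1+j)^{\nuP}$, with no case distinction in $t$.

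\textbf{What each approach buys.} Your version tracks the logarithm exactly; in particular the exponent $\nuP+1$ at $a=1/k$ comes straight out of the sum. The paper's justification of that exponent is rather loose. Your evaluation at $t=0$ also gives the lower bound cleanly, whereas in the paper it is only implicit in the $\approx$ signs of case (ii). What the paper's route gives in return is information on how $J(t)$ depends on $t$. For example, away from $t=0$ the far part is only $\lambda^{-a}$, so when $a>1/k$ the supremum is produced near $t=0$. The lemma does not need this.

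\textbf{A small slip to fix.} In Step 3 the inequality $F(s^k)\ge(\lambda s^k)^{-a}\log^{\nuP}(2+\lambda s^k)$ is false as written when $\nuP=1$: at $\lambda s^k=1$ the right side is $\log 3>1=F$. It does hold up to a constant depending on $a$, because $x^{-a}\log(2+x)$ is bounded on $[1,\infty)$, and that is all your lower bound uses.
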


\begin{proof}
	First, we ignore the logarithmic factor $\bigl(\log(2+\lambda\abs{t^k-s^k})\bigr)^{\nuP}$ and estimate the pure power. We will add the logarithm back at the end. We split $J(t)=J_{\mathrm{near}}(t)+J_{\mathrm{far}}(t)$. The near part is over the set $E_{\mathrm{near}}=\{(t,s): \abs{t^k-s^k}\le1/\lambda\}$. The far part is over the complement.

	On $E_{\mathrm{near}}$, the integrand is at most $1$. We know
	\[
		\abs{\{s:\abs{s^k-t^k}\le\varepsilon\}}\lesssim\varepsilon^{1/k}
	\]
	uniformly in $t$. So
	\begin{equation}\label{near}
		\sup_t J_{\mathrm{near}}(t)\lesssim\lambda^{-1/k}.
	\end{equation}

	For $J_{\mathrm{far}}$, we check three ranges of $t$.

	\emph{(i) $\abs{t}\approx1$.} Near the singularity $s=t$, we have $\abs{t^k-s^k}\approx\abs{s-t}$. So
	\begin{equation}
		J_{\mathrm{far}}(t)\lesssim\lambda^{-a}\int_{1/\lambda}^{c}r^{-a}\dint r\approx\lambda^{-\min\{a,1\}}.
	\end{equation}
	(If $a=1$, there is a logarithm). If $k$ is even, the singularity at $s=-t$ gives the same bound.

	\emph{(ii) $\abs{t}\lesssim\lambda^{-1/k}$.} If $\abs{s}\gg\lambda^{-1/k}$, we have $\abs{t^k-s^k}\approx\abs{s}^k$. So
	\begin{equation}
		J_{\mathrm{far}}(t)\lesssim\lambda^{-a}\int_{\lambda^{-1/k}}^{c}s^{-ka}\dint s\approx
		\begin{cases}
			\lambda^{-1/k}, & ka>1,\\
			\lambda^{-1/k}\log\lambda, & ka=1,\\
			\lambda^{-a}, & ka<1.
		\end{cases}
	\end{equation}

	\emph{(iii) $\lambda^{-1/k}\ll\abs{t}\ll1$ (intermediate scale).}
	Let $a=1/\hP$. Since $\hP\ge1$, we have $0<a\le1$. Let $\mu:=\lambda k t^{k-1}$.
	We bound the integral \eqref{Jdef} over $E_{\mathrm{far}}$ by splitting $s\in[-c,c]$ into three zones. We write $r:=s-t$. Since $\lambda^{-1/k}\ll t\ll1$, we have $\lambda t^{k}\gg1$ (so $1/\mu\ll t$) and $\mu\approx\lambda t^{k-1}\le\lambda$.
	
	\emph{Zone A: the core $\abs{r}\le t/2$.}
	Here $s$ and $t$ have the same sign and $s\approx t$. By the mean value theorem, $\abs{t^k-s^k}=k\,\xi^{k-1}\abs{r}$ for some $\xi\approx t$. So
	\[
		\abs{t^k-s^k}\approx\mu\abs{r}/\lambda.
	\]
	Since $1/\mu\ll t$ and $0<a\le1$,
	\begin{equation}\label{iii-A}
		\int_{\{\abs{r}\le t/2\}\cap E_{\mathrm{far}}}\bigl(\lambda\abs{t^k-s^k}\bigr)^{-a}\dint s
		\approx\int_{1/\mu}^{t/2}(\mu r)^{-a}\dint r
		\approx\mu^{-a}t^{\,1-a}.
	\end{equation}
	The upper endpoint dominates because $t\gg1/\mu$ and $1-a\ge0$. (If $a=1$, the bound is $\mu^{-1}\log(\mu t)\lesssim\mu^{-1}\log\lambda$, which we absorb later.)
	
	\emph{Zone B: the collar $t/2<\abs{r}\le 3\abs{t}$.}
	Here $\abs{s}\lesssim\abs{t}$, but $s$ is far from $t$. So
	\[
		\abs{t^k-s^k}\gtrsim t^{k}.
	\]
	The zone has length $\lesssim\abs{t}$. So
	\begin{equation}\label{iii-B}
		\int_{\{t/2<\abs{r}\le3\abs{t}\}}\bigl(\lambda\abs{t^k-s^k}\bigr)^{-a}\dint s
		\lesssim\abs{t}\,\bigl(\lambda t^{k}\bigr)^{-a}
		=\lambda^{-a}\,t^{\,1-ak}
		\approx\mu^{-a}t^{\,1-a}.
	\end{equation}
	This matches Zone~A.
	
	\emph{Zone C: the far range $3\abs{t}<\abs{s}\le c$.}
	Here $\abs{s}\ge3\abs{t}$, so
	\[
		\abs{t^k-s^k}\approx\abs{s}^{k}.
	\]
	Therefore
	\begin{equation}\label{iii-C}
		\int_{3\abs{t}<\abs{s}\le c}\bigl(\lambda\abs{t^k-s^k}\bigr)^{-a}\dint s
		\approx\lambda^{-a}\int_{\abs{t}}^{c}s^{-ak}\dint s
		\lesssim
		\begin{cases}
			\lambda^{-a}, & ak<1,\\[1mm]
			\lambda^{-a}\log(1/\abs{t}), & ak=1,\\[1mm]
			\mu^{-a}t^{\,1-a}, & ak>1.
		\end{cases}
	\end{equation}
	If $ak>1$, the lower endpoint $s\approx\abs{t}$ dominates, giving $\mu^{-a}t^{1-a}$. If $ak<1$, the upper endpoint $s\approx c\approx1$ dominates, giving $\lambda^{-a}$.
	
	\emph{Conclusion of (iii).} Adding \eqref{iii-A}--\eqref{iii-C} gives
	\begin{equation}\label{iii-total}
		J_{\mathrm{far}}(t)\lesssim
		\begin{cases}
			\lambda^{-a}, & a<1/k,\\[1mm]
			\lambda^{-a}t^{\,1-ak}, & a>1/k,
		\end{cases}
	\end{equation}
	up to a factor $\log(1/\abs{t})\le\log\lambda$ at $a=1/k$.
	The term $\lambda^{-a}t^{\,1-ak}$ is a power of $t$. It is monotone on $t\in[\lambda^{-1/k},1]$. If $a<1/k$, it increases, and the supremum is $\lambda^{-a}$ at $t\approx1$. If $a>1/k$, it decreases, and the supremum is $\lambda^{-1/k}$ at $t\approx\lambda^{-1/k}$. In both cases, the supremum is already covered by cases (i) and (ii). So the intermediate scale does not change the maximum.

	Collecting \eqref{near} and (i)--(iii) and using $1/k\le1$,
	\begin{equation}
		\sup_{\abs{t}\le c}J(t)\approx\lambda^{-\min\{a,1/k\}}\qquad(a\neq1/k),
	\end{equation}
	with an extra $\log(2+\lambda)$ at $a=1/k$. Finally we reinstate the factor $\bigl(\log(2+\lambda\abs{t^k-s^k})\bigr)^{\nuP}$. On the dominant part of each integral, one has $\lambda\abs{t^k-s^k}\approx\lambda$. So this factor contributes $\bigl(\log(2+\lambda)\bigr)^{\nuP}$ when $a<1/k$. It is $O(1)$ when $a>1/k$. At $a=1/k$ it upgrades the single logarithm to $\bigl(\log(2+\lambda)\bigr)^{\nuP+1}$. These are exactly the powers $2M_{P,k}$ in \eqref{def-M}.
\end{proof}

\begin{proof}[Proof of the estimate in Theorem~\ref{Thm-L2}]
	By \eqref{schur} and Lemma~\ref{Lem-schur},
	\begin{equation}
		\norm{T_\lambda}_{L^2\to L^2}^2=\norm{T_\lambda^{*}T_\lambda}_{L^2\to L^2}\lesssim\lambda^{-\min\{1/\hP,1/k\}}\bigl(\log(2+\lambda)\bigr)^{2M_{P,k}},
	\end{equation}
	and taking square roots gives \eqref{L2-est}.
\end{proof}

\subsection{Optimality}
Following \cite{Xu23,Xu25}, we take a nonnegative cut-off with
\begin{equation}\label{Supp-Func}
	\psi(x,y,t)=\begin{cases}
		0, &\quad\abs{(x,y,t)}\ge c,\\
		1, &\quad\abs{(x,y,t)}\le\frac{c}{2},
	\end{cases}
\end{equation}
where $c$ is a small positive number.

Assume the a priori estimate
\[
	\norm{T_\lambda f}_{L^2(\R^2)}\le C_\psi\lambda^{-\delta}\norm{f}_{L^2(\R)}.
\]

\emph{Knapp example (spatial degeneracy).} Let $f(t)=\chi_{[0,1]}(t)$, so $\norm{f}_{L^2}=1$. If $\abs{(x,y)}\le\tfrac12$ and $\lambda\abs{P(x,y)}\le1$, then $\lambda\abs{P(x,y)t^k}\le1$ because $\abs{t}\le1$. So there is no oscillation and $\abs{T_\lambda f(x,y)}\gtrsim1$. By the lower sublevel bound in Lemma~\ref{Lem-Var}, this set of $(x,y)$ has measure $\gtrsim\lambda^{-1/\hP}$. Thus
\begin{equation}
	\lambda^{-\frac{1}{2\hP}}\lesssim\Bigl[\int_{\{\lambda\abs{P}\le1\}}\abs{T_\lambda f(x,y)}^2\dint x\dint y\Bigr]^{1/2}\le C_\psi\lambda^{-\delta}.
\end{equation}
This implies $\delta\le\tfrac{1}{2\hP}$.

\emph{Focusing example (temporal degeneracy).} Since $P$ is continuous and $P(0,0)=0$, there is a ball $B_0$ of positive radius where $\abs{P}\le1$. Let $f=\chi_{[0,\rho]}$ with $\rho=\lambda^{-1/k}$, so $\norm{f}_{L^2}=\lambda^{-1/(2k)}$. For $(x,y)\in B_0$ and $0\le t\le\rho$, we have $\lambda\abs{P(x,y)t^k}\le\lambda\cdot1\cdot\rho^{k}=1$. So $\abs{T_\lambda f(x,y)}\gtrsim\rho=\lambda^{-1/k}$ on $B_0$. Therefore $\norm{T_\lambda f}_{L^2}\gtrsim\lambda^{-1/k}$, and
\begin{equation}
	\lambda^{-1/k}\lesssim C_\psi\lambda^{-\delta}\lambda^{-1/(2k)}.
\end{equation}
This forces
\[
	\delta\le\tfrac1{2k}.
\]

Together, these two examples give $\delta\le\tfrac12\min\{1/\hP,1/k\}$. This matches the exponent in \eqref{L2-est}. This completes the proof of Theorem~\ref{Thm-L2}. \qed


\section{The complex interpolation and \texorpdfstring{$L^2\to L^p$}{L2 to Lp} estimates}\label{sec-Lp}

We now prove Theorem~\ref{Thm-Lp} following the strategy of \cite{Xu25}.

\subsection{Reduction}
We rewrite \eqref{Lp-est} in dual form. Let $p=2k+2$ and $p'=\tfrac{2k+2}{2k+1}$. The dual operator is
\begin{equation}
	T_\lambda^{*}g(t)=\int_{\R^2}e^{-i\lambda P(u,v)t^k}\overline{\psi}(u,v,t)g(u,v)\dint u\dint v.
\end{equation}
Exactly as in \cite{Xu25}, we have
\begin{equation}\label{Dual-Red}
	\begin{split}
	\int_{\R}\abs{T_\lambda^{*}g(t)}^2\dint t&=\int_{\R^2}T_Kg(x,y)\,\overline{g}(x,y)\dint x\dint y,\\
	T_Kg(x,y)&=\int_{\R^2}K(x,y,u,v)g(u,v)\dint u\dint v,
	\end{split}
\end{equation}
where
\begin{equation}\label{TK-kernel}
	K(x,y,u,v)=\int_{\R}e^{i\lambda\left[P(x,y)-P(u,v)\right]t^k}\overline{\psi}(u,v,t)\psi(x,y,t)\dint t .
\end{equation}
Thus \eqref{Lp-est} is equivalent to the bound
\begin{equation}\label{TK-target}
	T_K:L^{p'}(\R^2)\to L^{p}(\R^2)\quad\text{with norm}\quad\lesssim\lambda^{-\frac{1}{(k+1)\hP}}\bigl(\log(2+\lambda)\bigr)^{2M_{P,1}/(k+1)}.
\end{equation}
Let
\begin{equation}\label{Phi-W}
	W:=P(x,y)-P(u,v).
\end{equation}
Then $K$ depends on $(x,y,u,v)$ through the scalar $W$ in the phase.

\subsection{The merged analytic family}
Following \cite[Ch.~IX]{Ste93}, we fix a smooth cutoff $\zeta\in C_c^\infty$ with $\zeta(s)=1$ for $\abs{s}\le1$. Let
\begin{equation}\label{delta-alpha}
	\delta_\alpha(s)=\begin{cases}
		\dfrac{e^{\alpha^2}}{\Gamma(\alpha)}s^{\alpha-1}\zeta^2(s), & s>0,\\[2mm]
		0, & s\le0.
	\end{cases}
\end{equation}
This is an analytic continuation of the Dirac mass, with $\delta_0=\delta$. We embed $T_K$ into the analytic family
\begin{equation}\label{TKalpha}
	T_K^{\alpha}g(x,y)=\int_{\R^2}K^{\alpha}(x,y,u,v)g(u,v)\dint u\dint v,
\end{equation}
where
\begin{equation}\label{Kalpha}
	K^{\alpha}(x,y,u,v)=\int_{\R}\int_{\R}e^{i\lambda W\left(t^k+s\right)}\overline{\psi}(u,v,t)\psi(x,y,t)\,\delta_\alpha(s)\dint t\dint s .
\end{equation}
Since $\delta_0=\delta$, we have $K^{0}=K$ and $T_K^{0}=T_K$. We now bound the two endpoints.

\begin{proposition}[$L^1\to L^\infty$ endpoint]\label{Prop-Linfty}
	For $\mathrm{Re}\,\alpha=-1/k$,
	\begin{equation}\label{Linfty-est}
		\norm{T_K^{\alpha}g}_{L^\infty(\R^2)}\le C\norm{g}_{L^1(\R^2)}.
	\end{equation}
\end{proposition}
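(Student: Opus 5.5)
The plan is to show that the kernel $K^{\alpha}$ in \eqref{Kalpha} is bounded uniformly in $(x,y,u,v)$ and in $\alpha$ on the line $\mathrm{Re}\,\alpha=-1/k$. Then \eqref{Linfty-est} follows immediately from $\norm{T_K^\alpha g}_\infty\le\sup\abs{K^\alpha}\,\norm{g}_1$. Write $\mu:=\lambda W=\lambda\bigl(P(x,y)-P(u,v)\bigr)$, which is constant in $(t,s)$. The $s$- and $t$-integrations then factor:
\[
K^{\alpha}=\Bigl(\int_{\R}e^{i\mu s}\delta_\alpha(s)\dint s\Bigr)\cdot\Bigl(\int_{\R}e^{i\mu t^k}\overline{\psi}(u,v,t)\psi(x,y,t)\dint t\Bigr)=:\widehat{\delta_\alpha}(-\mu)\cdot I(\mu).
\]
Here $\delta_\alpha$ for $\mathrm{Re}\,\alpha\le0$ is understood as the analytic continuation of the distribution in \eqref{delta-alpha}.

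\emph{Step 1 (the $s$-factor).} I will use the standard fact from \cite[Ch.~IX]{Ste93}: for $\alpha$ in a vertical strip,
\[
\abs{\widehat{\delta_\alpha}(\xi)}\le C\,(1+\abs{\xi})^{-\mathrm{Re}\,\alpha},
\]
with $C$ uniform in $\mathrm{Im}\,\alpha$. To prove it, split $\zeta^2=\zeta^2(s)\chi(\abs{\xi}s)+\zeta^2(s)(1-\chi(\abs{\xi}s))$. Integrate by parts (i.e. use the analytic continuation, subtracting Taylor terms at $s=0$) on the piece near $0$. On the piece away from $0$, integrate by parts in $e^{i\xi s}$ enough times. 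The resulting constants grow at most like $e^{C\abs{\mathrm{Im}\,\alpha}}\abs{\Gamma(\alpha)}^{-1}$-type factors, which grow at most exponentially in $\abs{\mathrm{Im}\,\alpha}$. They are beaten by $\abs{e^{\alpha^2}}=e^{(\mathrm{Re}\,\alpha)^2-(\mathrm{Im}\,\alpha)^2}$. With $\mathrm{Re}\,\alpha=-1/k$ this gives $\abs{\widehat{\delta_\alpha}(-\mu)}\lesssim(1+\abs{\mu})^{1/k}$.

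\emph{Step 2 (the $t$-factor).} The phase $\mu t^k$ has $k$-th derivative $k!\,\mu$. By van der Corput's lemma with amplitude (sup plus total variation of $\overline{\psi}(u,v,\cdot)\psi(x,y,\cdot)$, uniformly bounded in $(x,y,u,v)$),
\[
\abs{I(\mu)}\lesssim\min\{1,\abs{\mu}^{-1/k}\}\approx(1+\abs{\mu})^{-1/k}.
\]

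\emph{Step 3.} Multiplying the two factors gives $\abs{K^\alpha}\lesssim(1+\abs{\mu})^{1/k}(1+\abs{\mu})^{-1/k}\lesssim1$, uniformly in all variables and in $\mathrm{Im}\,\alpha$. This proves \eqref{Linfty-est}.

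The main obstacle is Step 1: making rigorous sense of $\delta_\alpha$ at $\mathrm{Re}\,\alpha=-1/k<0$, and getting the Fourier bound with constants controlled in $\mathrm{Im}\,\alpha$. I expect to quote this essentially verbatim from Stein's treatment. The interchange of the $s$-integral (a distribution pairing) with the $t$-integral should be justified by first working with $\mathrm{Re}\,\alpha>0$ and then continuing analytically, since both sides are analytic in $\alpha$.
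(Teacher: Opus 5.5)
Your proposal is correct and follows the paper's proof essentially verbatim. You factor $K^{\alpha}$ into the $\alpha$-independent $t$-integral $K^{(0)}$ times the Fourier transform of $\delta_\alpha$ evaluated at (a multiple of) $\lambda W$. You bound the first factor by $(1+\lambda\abs{W})^{-1/k}$ via van der Corput of order $k$, bound the second by $(1+\lambda\abs{W})^{1/k}$ via Stein's estimate at $\mathrm{Re}\,\alpha=-1/k$, and multiply; your remarks on the analytic continuation of $\delta_\alpha$ and uniformity in $\mathrm{Im}\,\alpha$ (absorbed by $e^{\alpha^2}$) fill in details the paper leaves to the citation.
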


\begin{proof}
	Carrying out the $s$-integration in \eqref{Kalpha} and writing
	\begin{equation}\label{Kzero}
		K^{(0)}(x,y,u,v)=\int_\R e^{i\lambda Wt^k}\overline{\psi}(u,v,t)\psi(x,y,t)\dint t
	\end{equation}
	for the ($\alpha$-independent) inner factor, we obtain
	\begin{equation}\label{split}
		K^{\alpha}(x,y,u,v)=K^{(0)}(x,y,u,v)\cdot\widehat{\delta_\alpha}\!\left(\frac{\lambda W}{2\pi}\right).
	\end{equation}
	By van der Corput's lemma of order $k$ applied to the merged phase $\lambda Wt^k$,
	\begin{equation}\label{vdc-k}
		\abs{K^{(0)}(x,y,u,v)}\le C\bigl(1+\lambda\abs{W}\bigr)^{-1/k}.
	\end{equation}
	The standard bound for the Fourier transform of $\delta_\alpha$ (see \cite[Ch.~IX]{Ste93}), $\abs{\widehat{\delta_\alpha}(\tau)}\le C(1+\abs{\tau})^{-\mathrm{Re}\,\alpha}$, gives at $\mathrm{Re}\,\alpha=-1/k$
	\begin{equation}\label{delta-hat}
		\Bigl|\widehat{\delta_\alpha}\!\left(\tfrac{\lambda W}{2\pi}\right)\Bigr|\le C\bigl(1+\lambda\abs{W}\bigr)^{1/k}.
	\end{equation}
	Since \eqref{vdc-k} and \eqref{delta-hat} are expressed through the \emph{same} merged variable $W$, their product is uniformly bounded, and $\norm{T_K^\alpha g}_{L^\infty}\le\sup\abs{K^\alpha}\,\norm{g}_{L^1}\le C\norm{g}_{L^1}$.
\end{proof}

\begin{proposition}[$L^2\to L^2$ endpoint]\label{Prop-L2}
	Assume $\mathrm{Re}\,\alpha=1$. We have the bound
	\begin{equation}\label{L2endpoint}
		\norm{T_K^{\alpha}g}_{L^2(\R^2)}\le C(\alpha)\,\lambda^{-1/\hP}\bigl(\log(2+\lambda)\bigr)^{2M_{P,1}}\norm{g}_{L^2(\R^2)},
	\end{equation}
	 where $C(\alpha)$ grows at most subexponentially in $\mathrm{Im}\,\alpha$.
\end{proposition}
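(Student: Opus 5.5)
The plan is to follow Stein's device from \cite[Ch.~IX]{Ste93} mentioned in point (b) of the introduction. At $\mathrm{Re}\,\alpha=1$ the distribution $\delta_\alpha$ is a bounded, compactly supported function, so the $s$-integral in \eqref{Kalpha} can be kept rather than evaluated. We would then factor $T_K^\alpha$ as a composition of two operators of the type in Theorem~\ref{Thm-L2} with $k=1$, and apply \eqref{L2-est} to each factor. With $k=1$ one has $\min\{1/\hP,1\}=1/\hP$ since $\hP\ge1$, and the logarithmic exponent is $M_{P,1}$ from \eqref{def-M1}. Each factor should contribute $\lambda^{-1/(2\hP)}(\log(2+\lambda))^{M_{P,1}}$, and the product gives \eqref{L2endpoint}.

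\textbf{Step 1: change of variables.} In \eqref{Kalpha} substitute $w=t^k+s$, so that
\[
K^\alpha(x,y,u,v)=\int_\R\int_\R e^{i\lambda (P(x,y)-P(u,v))w}\,\psi(x,y,t)\,\overline{\psi}(u,v,t)\,\delta_\alpha(w-t^k)\dint w\dint t .
\]
Since $\abs{t}\le1$ and $\zeta$ has compact support, $w$ ranges over a fixed bounded interval. We insert a cutoff $\chi(w)\in C_c^\infty$ equal to $1$ there; this changes nothing.

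\textbf{Step 2: factorization.} Write $T_K^\alpha=A_\alpha B$. Here $B:L^2(\R^2)\to L^2(\R^2_{w,t})$ is
\[
Bg(w,t)=\chi(w)\int_{\R^2}e^{-i\lambda P(u,v)w}\,\overline{\psi}(u,v,t)\,g(u,v)\dint u\dint v,
\]
and $A_\alpha h(x,y)=\int_\R A_t\bigl[\delta_\alpha(\cdot-t^k)h(\cdot,t)\bigr](x,y)\dint t$, where
\[
A_t F(x,y)=\int_\R e^{i\lambda P(x,y)w}\psi(x,y,t)\chi(w)F(w)\dint w .
\]
For each fixed $t$, $A_t$ is an operator $T_\lambda$ with $k=1$ and amplitude $\psi(x,y,t)\chi(w)$, and $B(\cdot,t)$ is the adjoint of such an operator.

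\textbf{Step 3: applying Theorem~\ref{Thm-L2} to each factor.} Theorem~\ref{Thm-L2} with $k=1$ gives
\[
\norm{Bg(\cdot,t)}_{L^2_w}\lesssim\lambda^{-\frac1{2\hP}}(\log(2+\lambda))^{M_{P,1}}\norm{g}_{L^2}
\]
uniformly in $t$. For $A_\alpha$, the bound $\abs{\delta_\alpha}\le C(\alpha)$ on $\mathrm{Re}\,\alpha=1$, Minkowski's inequality in $t$ over a bounded interval, and the same theorem give
\[
\norm{A_\alpha h}_{L^2}\lesssim C(\alpha)\,\lambda^{-\frac1{2\hP}}(\log(2+\lambda))^{M_{P,1}}\sup_t\norm{h(\cdot,t)}_{L^2_w}.
\]
We apply this with $h=Bg$ and combine with the previous display, which controls $\sup_t\norm{Bg(\cdot,t)}_{L^2_w}$. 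This yields \eqref{L2endpoint}.

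For the growth of $C(\alpha)$, take $\alpha=1+i\gamma$. Then $\abs{e^{\alpha^2}}=e^{1-\gamma^2}$, while $1/\abs{\Gamma(\alpha)}\lesssim e^{\pi\abs{\gamma}/2}$. Hence $C(\alpha)$ is in fact bounded, which is certainly admissible for Stein's interpolation.

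\textbf{Main obstacle: uniformity.} The step needing most care is that Theorem~\ref{Thm-L2} is applied to a $t$-parametrized family of amplitudes $\psi(x,y,t)\chi(w)$ whose $w$-support need not lie in the unit ball. I would handle this by rerunning the proof of Theorem~\ref{Thm-L2} rather than quoting its statement. Lemma~\ref{Lem-kernel} only uses Varchenko's bound, Lemma~\ref{Lem-Var}, in $(x,y)$, with amplitudes lying in a bounded subset of $C_c^\infty$ uniformly in $(t,w,w')$. The Schur computation of Lemma~\ref{Lem-schur} with $k=1$ works on any fixed bounded $w$-interval; for $k=1$ the integrand is translation invariant, and case (i) alone applies. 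So the constants are uniform in $t$, and the argument goes through.
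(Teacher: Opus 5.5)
Your proposal is correct and follows essentially the paper's route: after the substitution $w=s+t^k$ you factor $T_K^\alpha$ into two degree-one operators and apply Theorem~\ref{Thm-L2} with $k=1$ to each factor. The differences are minor. You control the intermediate function in $\sup_t L^2_w$ via Minkowski, where the paper uses $L^2_{w,t}$ via Cauchy--Schwarz in $t$. You also explicitly check the uniformity in $t$, the non-small $w$-support of the amplitude, and the boundedness of $C(\alpha)$, all of which the paper passes over.
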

\begin{proof}
	We write $T_K^\alpha$ as a composition of two operators. We have
	\begin{align}
		T_K^\alpha g(x,y)
		&=\int_{\R^2}K^\alpha(u,v,x,y) \bar{g}(u,v)\dint u\dint v\\
		&=\int_{\R^2}\int_{\R^2} e^{i\lambda\left[P(x,y)-P(u,v)\right](t^k+s)}\bar{\psi}(u,v,t)\psi(x,y,t)\delta_\alpha(s)\dint t\dint s\, g(u,v)\dint u\dint v\\
		&:=S_1\circ S_2(g)(x,y),
	\end{align}
	where
	\begin{equation}
		S_1(h)(x,y)=\frac{e^{\alpha^2}}{\Gamma(\alpha)}\int_0^{+\infty}\int_\R e^{i\lambda P(x,y)(t^k+s)}\psi(x,y,t)\zeta(s) s^{\alpha-1}h(t,s)\dint t\dint s,
	\end{equation}
	and 
		\begin{equation}
		S_2(g)(t,s)=\int_{\R^2} e^{-i\lambda P(u,v)(t^k+s)}\bar{\psi}(u,v,t)\zeta(s) g(u,v)\dint u\dint v.
	\end{equation}
	Applying the coordinate transformation
	\begin{equation}
		w=s+t^k, t=t,
	\end{equation}
	we rewrite $S_1(h)(x,y)$ as
		\begin{equation}
		S_1(h)(x,y)=\frac{e^{\alpha^2}}{\Gamma(\alpha)}\int_{t^k}^{+\infty}\int_\R e^{i\lambda P(x,y)w}\psi(x,y,t)\zeta(w-t^k) (w-t^k)^{\alpha-1}h(t,w-t^k)\dint t\dint w.
	\end{equation}
	We set 
	\begin{equation}
		\tilde{\psi}(x,y,w,t)=\psi(x,y,t)\zeta(w-t^k),\quad \tilde{h}(w,t)=(w-t^k)^{\alpha-1}h(t,w-t^k)\chi_{[0,+\infty)}(w-t^k).
	\end{equation}
	It follows that
	\begin{equation}
		S_1(h)(x,y)=\frac{e^{\alpha^2}}{\Gamma(\alpha)}\int_{\R^2} e^{i\lambda P(x,y)w}\tilde{\psi}(x,y,w,t) \tilde{h}(w,t)\dint t\dint w.
	\end{equation}
	In the same way, we express $S_2(g)$ in the new coordinates $(w,t)$ as
	\begin{equation*}
		S_2(g)(w,t)=\int_{\R^2} e^{-i\lambda P(u,v)w}\tilde{\psi}(u,v,w,t) g(u,v)\dint u\dint v,
	\end{equation*}
	where
	\begin{equation*}
		\tilde{\psi}(u,v,w,t)=\bar{\psi}(u,v,w-t^k)\zeta(w-t^k).
	\end{equation*}
	The phase in $w$ has degree one. So we can apply Theorem~\ref{Thm-L2} twice, in the case $k=1$. This gives
	\begin{align}
		\norm{S_1(S_2(g))}_{L^2(\R^2)}^2
		&=\int_{\R^2}\abs{\frac{e^{\alpha^2}}{\Gamma(\alpha)}\int_{\R^2}e^{i\lambda P(x,y)w}\tilde{\psi}(x,y,w,t) S_2(g)(w,t)\dint w\dint t}^2\dint x\dint y\\
		&\leq C(\alpha)\int_{\R}\int_{\R}\abs{\int_{\R^2}e^{i\lambda P(x,y)w}\tilde{\psi}(x,y,w,t) S_2(g)(w,t)\dint w\dint t}^2\dint x\dint y\\
		&\leq \int_{\R}C(\alpha,t)\int_{\R^2}\abs{\int_{\R} e^{i\lambda P(x,y)w}\tilde{\psi}(x,y,w,t) S_2(g)(w,t)\dint w}^2\dint x\dint y\dint t\\
		&\leq \lambda^{-\frac{1}{\hP}}\bigl(\log(2+\lambda)\bigr)^{2M_{P,1}}\int_{\R}\int_{\R}C(\alpha,t)\abs{S_2(g)(w,t)}^2\dint w\dint t\\
		&=\lambda^{-\frac{1}{\hP}}\bigl(\log(2+\lambda)\bigr)^{2M_{P,1}}\\
		&\qquad\times\int_{\R}\int_{\R}C(\alpha,t)\abs{\int_{\R^2} e^{-i\lambda P(u,v)w}\tilde{\psi}(u,v,w,t) g(u,v)\dint u\dint v}^2\dint w\dint t\\
		&\leq C(\alpha) \lambda^{-\frac{2}{\hP}}\bigl(\log(2+\lambda)\bigr)^{4M_{P,1}}\norm{g}_{L^2(\R^2)}^2.
	\end{align}
\end{proof}

Proposition~\ref{Prop-L2} is the key step. It explains why a general $P$ is allowed. In \cite{Xu25}, the substitution $w=s+t^k$ was used to separate the two monomials into $x^m w+y^n t^l$. This allowed the one-dimensional Phong--Stein estimates to be iterated. That separation does not work for a general $P$. Here, the same substitution merges $t^k$ and $s$ into a single degree-one phase $\lambda W w$. No separation is needed. The endpoint is exactly the $k=1$ case of the merged Varchenko/Schur argument from Section~\ref{sec-L2}. It is governed only by the two-dimensional Newton height $\hP$. So the interpolation works for every $P$. The exponent is $1/\hP$ instead of $\min\{1/\hP,1/k\}$. In the degree-one case, the focusing regime vanishes entirely, as $1/\hP\le1$.

\subsection{Interpolation}
The family $\{T_K^{\alpha}\}$ is analytic and has admissible growth on the strip $-1/k\le\mathrm{Re}\,\alpha\le1$. We apply Stein's complex interpolation to Propositions~\ref{Prop-Linfty} and~\ref{Prop-L2}. At $\alpha=0$, we get the parameter
\begin{equation}
	0=(1-\theta)\cdot1+\theta\cdot\bigl(-\tfrac1k\bigr)\ \Longrightarrow\ \theta=\frac{k}{k+1},\qquad 1-\theta=\frac{1}{k+1}.
\end{equation}
The exponents are
\begin{equation}
	\frac1p=\frac{1-\theta}{2}+\theta\cdot0=\frac{1}{2(k+1)},\qquad p=2k+2,\quad p'=\tfrac{2k+2}{2k+1}.
\end{equation}
We interpolate the endpoint decay $\lambda^{-1/\hP}(\log(2+\lambda))^{2M_{P,1}}$ from \eqref{L2endpoint} (weight $1-\theta$) against $\lambda^{0}$ (weight $\theta$). This gives
\begin{equation}
	\norm{T_K}_{L^{p'}\to L^{p}}\le C\,\lambda^{-\frac{1}{(k+1)\hP}}\bigl(\log(2+\lambda)\bigr)^{2M_{P,1}/(k+1)},\qquad p=2k+2 .
\end{equation}
By \eqref{Dual-Red}, this means
\[
	\norm{T_\lambda^{*}g}_{L^2}\le C\lambda^{-\frac{1}{2(k+1)\hP}}\bigl(\log(2+\lambda)\bigr)^{M_{P,1}/(k+1)}\norm{g}_{L^{p'}}.
\]
Dualizing this estimate yields \eqref{Lp-est}, which concludes the proof of Theorem~\ref{Thm-Lp}.

\subsection{Optimality}
We reuse the two examples of Section~\ref{sec-L2}, now with a general exponent $p$, and the a priori estimate $\norm{T_\lambda f}_{L^p}\le C_\psi\lambda^{-\delta}\norm{f}_{L^2}$.

\emph{Knapp}, with $f=\chi_{[0,1]}$: on the set $\{(x,y):\lambda\abs{P(x,y)}\le1\}$ we have $\abs{T_\lambda f}\gtrsim1$. This set has measure $\gtrsim\lambda^{-1/\hP}$. So
\begin{equation}
	\lambda^{-\frac{1}{p\hP}}\lesssim\Bigl[\int_{\{(x,y):\,\lambda\abs{P(x,y)}\le1\}}\abs{T_\lambda f(x,y)}^p\dint x\dint y\Bigr]^{1/p}\le C_\psi\lambda^{-\delta}.
\end{equation}
This implies $\delta\le\tfrac{1}{p\hP}$.

\emph{Focusing}, with $f=\chi_{[0,\lambda^{-1/k}]}$: as in Section~\ref{sec-L2}, $\norm{T_\lambda f}_{L^p}\gtrsim\lambda^{-1/k}$ while $\norm{f}_{L^2}=\lambda^{-1/(2k)}$, so
\[
	\delta\le\tfrac1{2k}.
\]

Thus for every $p$ the sharp exponent is at most $\min\{\tfrac{1}{p\hP},\tfrac1{2k}\}$. At $p=2k+2$ this equals
\[
	\min\bigl\{\tfrac{1}{2(k+1)\hP},\tfrac1{2k}\bigr\}.
\]
Since $\hP\ge1$ forces $\tfrac{1}{2(k+1)\hP}\le\tfrac{1}{2k}$, the minimum is $\tfrac{1}{2(k+1)\hP}$ for every $\hP$, matching the power in \eqref{Lp-est}. So the decay exponent in \eqref{Lp-est} is sharp for every $\hP$. This completes the proof of Theorem~\ref{Thm-Lp}. \qed

\begin{proof}[Proof of Corollary~\ref{Cor-range}]
	Assume $\hP\ge k$, so $1/\hP\le1/k$. Theorem~\ref{Thm-L2} gives the endpoint $p=2$ with rate $\tfrac{1}{2\hP}$ and log power $M_{P,k}$. The trivial bound is
	\[
		\norm{T_\lambda f}_{L^\infty}\le\norm{\psi}_\infty\norm{f}_{L^1}.
	\]
	Using Cauchy--Schwarz in $t$ over the unit support, this gives the endpoint $p=\infty$ with rate $0$. We interpolate these two bounds using the Riesz--Thorin theorem. For $\tfrac1p=\tfrac{1-\theta}{2}$, we get
	\begin{equation}
		\norm{T_\lambda f}_{L^p}\lesssim\lambda^{-\frac{1-\theta}{2}\cdot\frac{1}{\hP}}\bigl(\log(2+\lambda)\bigr)^{2M_{P,k}/p}\norm{f}_{L^2}=\lambda^{-\frac{1}{p\hP}}\bigl(\log(2+\lambda)\bigr)^{2M_{P,k}/p}\norm{f}_{L^2}.
	\end{equation}
	Sharpness for each $p$ follows from the Knapp example. Since $\hP\ge k$, we have $\tfrac{1}{p\hP}\le\tfrac{1}{2\hP}\le\tfrac1{2k}$. Consequently, the focusing example plays no role, and the minimum simplifies to $\tfrac{1}{p\hP}$.
\end{proof}


\section*{Acknowledgments}
The author is supported by the National Natural Science Foundation of China (Grant No. 12501124) and the Jiangsu Natural Science Foundation (Grant No. BK20200308).

\end{document}